\documentclass[11pt]{article}
\usepackage{authblk}

\usepackage[utf8]{inputenc}
\usepackage[T1]{fontenc}
\usepackage[utf8]{inputenc}
\usepackage{graphicx}
\usepackage{amsmath}
\usepackage{mathtools}
\usepackage{marginnote}

\usepackage[normalem]{ulem}

\usepackage{amsmath,amssymb,amsthm}
\usepackage{paralist}
\usepackage{verbatim}
\usepackage[right]{eurosym} 
\usepackage{url}
\usepackage{enumerate}
\usepackage{tikz}   

\usepackage[top=2.8cm,bottom=2.8cm,left=2.5cm,right=2.5cm]{geometry}
\usepackage[colorlinks=true, allcolors=blue]{hyperref}

\newtheorem{theorem}{Theorem}

\newtheorem{proposition}[theorem]{Proposition}

\newtheorem{corollary}[theorem]{Corollary}

\newtheorem{remark}[theorem]{Remark}


\renewcommand{\Re}{\mathrm{Re}}
\renewcommand{\Im}{\mathrm{Im}}

\newcommand*{\N}{\mathbb{N}}
\newcommand*{\R}{\mathbb{R}}

\newcommand*{\C}{\mathbb{C}}

\newcommand{\iu}{\mathrm{i}} 

\newcommand*{\B}{B}

\newcommand*\xbar[1]{%
	\,\hbox{%
		\vbox{%
			\hrule height 0.1pt
			\kern0.4ex%
			\hbox{%
				\kern-0.1em%
				\ensuremath{#1}%
				\kern-0.1em%
			}%
		}%
	}\,%
} 


\title{Stabilisation of stochastic single-file dynamics using port-Hamiltonian systems}

\author{Julia Ackermann\footnote{Research Group Applied and Computational Mathematics, \href{mailto:jackermann@uni-wuppertal.de}{jackermann@uni-wuppertal.de}} ,
Matthias Ehrhardt\footnote{Research Group Applied and Computational Mathematics,  \href{mailto:ehrhardt@uni-wuppertal.de}{ehrhardt@uni-wuppertal.de}}, 
Thomas Kruse\footnote{Research Group Applied and Computational Mathematics,  \href{mailto:tkruse@uni-wuppertal.de}{tkruse@uni-wuppertal.de}}, 
Antoine Tordeux\footnote{Research Group Traffic Safety and Reliability,  \href{mailto:tordeux@uni-wuppertal.de}{tordeux@uni-wuppertal.de}} 
}
\affil{IMACM, School of Mathematics and Natural Sciences, \\ University of Wuppertal, Germany}
\date{January 2024}

\begin{document}
	
	\maketitle
	
	\begin{tikzpicture}[remember picture,overlay]
		\node[anchor=north east,inner sep=20pt] at (current page.north east)
		{\includegraphics[scale=0.2]{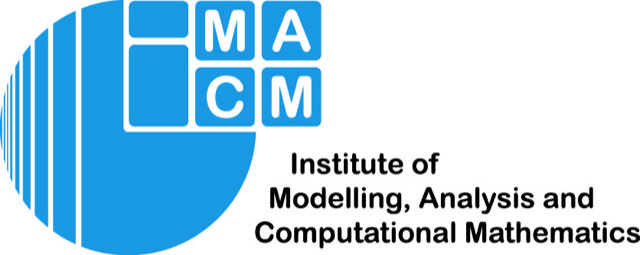}};
	\end{tikzpicture}
	
	\begin{abstract}
		This study revisits a recently proposed symmetric port-Hamiltonian single-file model in one dimension. 
		The uniform streaming solutions are stable in the deterministic model. However, the introduction of white noise into the dynamics causes the model to exhibit divergence. 
		In response, we introduce a control term in a port-Hamiltonian framework.
		Our results show that this control term effectively stabilises the dynamics even in the presence of noise, providing valuable insights for the control of road traffic flows.
	\end{abstract}
	
	\begin{minipage}{0.9\linewidth}
		\footnotesize
		\textbf{AMS classification:} 76A30, 82C22, 60H10, 37H30.
		\medskip
		
		\noindent
		\textbf{Keywords:} Port-Hamiltonian systems, 
		multi-agent systems,
		asymptotic stabilisation,
		modelling and control of road traffic flows 
	\end{minipage}

\section{Introduction}

The stability of single-file dynamics is of great interest in traffic engineering. 
Pioneering work by Reuschel, Pipes, and others 
\cite{reuschel1950fahrzeugbewegungen,pipes_OperationalAnalysisTraffic_1953,kometani1958stability,chandler1958traffic} have shown that single file traffic flow can present stability problems. 
Although the problem dates back to the 1950s and early 1960s and has been actively researched since then, see the review of
Wilson \& Ward~\cite{wilson_CarfollowingModelsFifty_2011}, it is still relevant today.
Even current adaptive cruise control (ACC) systems exhibit unstable dynamics in experiments, cf.\ the results in
Gunter et al.~\cite{gunter2020commercially}, Makridis et al.~\cite{makridis2021openacc} and Ciuffo et al.~\cite{CIUFFO2021}.  

In fact, many factors can perturb the stability of single-file uniform streaming. 
It is a well-known fact that lag and delay in the dynamics can lead to \textit{linear instability} of the uniform equilibrium solution, see 
Gasser et al.~\cite{gasser2004bifurcation},
Orosz et al.~\cite{orosz2010traffic},
Tordeux et al.~\cite{tordeux_LinearStabilityAnalysis_2012,tordeux2018traffic}. 
More recent approaches show that additive stochastic noise can also perturb single-file dynamics, even for stable deterministic systems, cf.\  
Treiber \& Helbing~\cite{treiber2009hamilton},
Tordeux \& Schadschneider~\cite{tordeux2016white},
Treiber \& Kesting~\cite{treiber2017intelligent}, 
Friesen et al.~\cite{friesen2021spontaneous}, 
Ehrhardt et al.~\cite{ehrhardt2023collective}.

Port-Hamiltonian systems (pHS) have recently emerged as a modelling framework for nonlinear physical systems, cf.\ van der Schaft \& Jeltsema~\cite{van2014port}.  
Unlike conservative Hamiltonian systems, pHS integrate control and external forcing into the dynamics through input and output ports. 
PHS can represent systems from diverse physical domains such as thermodynamics, electromechanics, electromagnetics, fluid mechanics and hydrodynamics, cf.\ Rashad et al.~\cite{rashad2020twenty}. 
The functional structure of pHS, which extends the Hamiltonian framework with dissipation, input and output ports, serves as a meaningful representation for a wide range of physical systems. 

Recent research shows that pHS provide valuable modelling techniques for multi-agent systems, including multi-input multi-output (MIMO) systems
(Sharf \& Zelazo~\cite{sharf2019analysis}), 
swarm behaviour (Matei et al.~\cite{matei2019inferring}), 
interacting particle systems (Jacob \& Totzeck~\cite{jacob2023port}), 
pedestrian dynamics
(Tordeux \& Totzeck~\cite{tordeux2022multi}), 
or autonomous vehicles and adaptive cruise control systems
(Knorn et al.~\cite{knorn2014scalability}, Dai \& Koutsoukos \cite{dai2020safety}). 
These micro-level agent-based models are constructed using finite-dimensional pHS. 
In addition, macroscopic traffic flow models 
(Bansal et al.~\cite{bansal2021port}) 
and more general fluid dynamics models
(Rashad et al.~\cite{rashad2021portb}) 
use infinite-dimensional pHS.

In this paper, we first recall a symmetric port-Hamiltonian single-file model recently introduced to describe collection motions in one dimension, see Ehrhardt et al.~\cite{ehrhardt2023collective}. 
Interestingly, the uniform solutions are stable for the deterministic model. 
However, the introduction of white noise in the dynamics causes the model to diverge. 
We show that the introduction of a control term - the input port in the port-Hamiltonian formulation - allows the dynamics to be stabilised even in the presence of noise.

The paper is structured as follows. 
In the next section, we introduce the port-Hamiltonian single-file model, elucidating its intricacies, and additionally, we expound upon its extension with the inclusion of the input control term. The analysis of the long-term behavior of the models, coupled with an exploration of their behavior as the number of agents $N$ approaches infinity, is presented in Section~\ref{long}. 
Some simulation results with a line of 10 agents are shown in Section~\ref{sim}. 
Section~\ref{ccl} is dedicated to discussions that not only encapsulate the key findings but also propose potential avenues for further extensions of the model.

\section{Port-Hamiltonian single-file models}\label{def}
Let us first start with the notations and the definitions of the models we are considering.

\paragraph{Notations.}
We consider $N\in \{3,4,\dots\}$ agents on a segment of length $L$ with periodic boundaries.
We denote 
\begin{equation*}
q(t)=\bigl(q_n(t)\bigr)_{n=1}^N\in\R^N,\qquad t\in [0,\infty),
\end{equation*}
the positions of the agents and 
\begin{equation*}
p(t)=\bigl(p_n(t)\bigr)_{n=1}^N\in\R^N,\qquad t\in [0,\infty),
\end{equation*}
are the velocities of the agents at time $t$.
We assume that the positions $q(t)$ and the velocities $p(t)$ of the $N$ agents are known at time $t=0$, 
\begin{equation*}
p(0)=p|_{t=0}= \mathfrak{p}\in\R^N,\qquad 
q(0)=q|_{t=0}= \mathfrak{q}\in\R^N,
\end{equation*}
and that the positions of the agents are initially ordered by their indices, i.e.,
\begin{equation}\label{eq:ini_order}
0\le q_1(0)\le q_2(0)\le\ldots\le q_N(0)\le L.
\end{equation}
Furthermore, in the following we consistently use the index $n+1$ to 
represent the nearest neighbour on the right and $n-1$ for the nearest neighbour on the left.
The right neighbour of the $N$-th agent is the first agent, i.e., $n+1=1$ when $n=N$. 
Conversely, the left neighbour of the first agent is the $N$-th agent, i.e., $n-1=N$ if $n=1$.

The distances to the right neighbours (see Fig.~\ref{fig:scheme})
\begin{equation*}
Q(t)=(Q_n(t))_{n=1}^N\in\R^N,\qquad t\in [0,\infty),
\end{equation*}
are given by
\begin{equation}\label{eq:def_dist}
\begin{cases}
Q_n(t)&=q_{n+1}(t)-q_n(t), \quad n\in\{1,\dots,N-1\},\\
Q_N(t)&=L+q_1(t)-q_N(t).
\end{cases}
\end{equation}
The distance to the left is $Q_N$ for the first agent and $Q_{n-1}$ for the $n$-th agent, $n\in\{2,\dots,N\}$. 
The index order of the agents at time zero makes the initial distance positive 
\begin{equation*}
Q(0)=Q|_{t=0}:=\mathfrak{Q}\in[0,\infty)^N.
\end{equation*}

\begin{figure}[!ht]
	\begin{center}\vspace{-2mm}
\small\hspace{-5mm}
\begin{tikzpicture}[x=1.07pt,y=1.07pt]
\definecolor{fillColor}{RGB}{255,255,255}
\path[use as bounding box,fill=fillColor,fill opacity=0.00] (0,0) rectangle (247.08, 77.21);
\begin{scope}
\path[clip] ( 5.50, 5.50) rectangle (235.08, 65.21);
\definecolor{drawColor}{RGB}{0,0,0}

\path[draw=drawColor,line width= 0.4pt,line join=round,line cap=round] (123.54, 58.14) --
	(126.50, 58.14) --
	(129.46, 58.11) --
	(132.42, 58.07) --
	(135.36, 58.01) --
	(138.30, 57.93) --
	(141.22, 57.84) --
	(144.12, 57.73) --
	(147.00, 57.60) --
	(149.86, 57.46) --
	(152.69, 57.30) --
	(155.50, 57.13) --
	(158.27, 56.94) --
	(161.00, 56.73) --
	(163.70, 56.51) --
	(166.36, 56.27) --
	(168.98, 56.02) --
	(171.55, 55.75) --
	(174.07, 55.47) --
	(176.54, 55.18) --
	(178.96, 54.87) --
	(181.33, 54.54) --
	(183.64, 54.21) --
	(185.88, 53.86) --
	(188.07, 53.49) --
	(190.19, 53.12) --
	(192.24, 52.73) --
	(194.23, 52.34) --
	(196.14, 51.93) --
	(197.99, 51.51) --
	(199.76, 51.08) --
	(201.45, 50.63) --
	(203.06, 50.19) --
	(204.60, 49.73) --
	(206.05, 49.26) --
	(207.43, 48.78) --
	(208.72, 48.30) --
	(209.92, 47.81) --
	(211.04, 47.31) --
	(212.07, 46.81) --
	(213.01, 46.30) --
	(213.87, 45.79) --
	(214.63, 45.27) --
	(215.30, 44.75) --
	(215.88, 44.22) --
	(216.37, 43.69) --
	(216.77, 43.16) --
	(217.07, 42.63) --
	(217.28, 42.09) --
	(217.40, 41.56) --
	(217.42, 41.02) --
	(217.35, 40.48) --
	(217.19, 39.95) --
	(216.93, 39.41) --
	(216.58, 38.88) --
	(216.14, 38.35) --
	(215.60, 37.82) --
	(214.98, 37.30) --
	(214.26, 36.78) --
	(213.45, 36.26) --
	(212.55, 35.75) --
	(211.57, 35.25) --
	(210.49, 34.75) --
	(209.33, 34.25) --
	(208.08, 33.77) --
	(206.75, 33.29) --
	(205.34, 32.81) --
	(203.84, 32.35) --
	(202.27, 31.90) --
	(200.61, 31.45) --
	(198.88, 31.02) --
	(197.07, 30.59) --
	(195.20, 30.18) --
	(193.24, 29.77) --
	(191.22, 29.38) --
	(189.14, 29.00) --
	(186.98, 28.63) --
	(184.77, 28.27) --
	(182.49, 27.93) --
	(180.15, 27.60) --
	(177.76, 27.28) --
	(175.31, 26.98) --
	(172.82, 26.69) --
	(170.27, 26.42) --
	(167.67, 26.16) --
	(165.04, 25.91) --
	(162.36, 25.68) --
	(159.64, 25.47) --
	(156.88, 25.27) --
	(154.10, 25.09) --
	(151.28, 24.92) --
	(148.43, 24.77) --
	(145.56, 24.64) --
	(142.67, 24.52) --
	(139.76, 24.42) --
	(136.83, 24.34) --
	(133.89, 24.27) --
	(130.94, 24.22) --
	(127.98, 24.18) --
	(125.02, 24.17) --
	(122.06, 24.17) --
	(119.09, 24.18) --
	(116.14, 24.22) --
	(113.18, 24.27) --
	(110.24, 24.34) --
	(107.32, 24.42) --
	(104.40, 24.52) --
	(101.51, 24.64) --
	( 98.64, 24.77) --
	( 95.80, 24.92) --
	( 92.98, 25.09) --
	( 90.19, 25.27) --
	( 87.44, 25.47) --
	( 84.72, 25.68) --
	( 82.04, 25.91) --
	( 79.40, 26.16) --
	( 76.81, 26.42) --
	( 74.26, 26.69) --
	( 71.76, 26.98) --
	( 69.32, 27.28) --
	( 66.92, 27.60) --
	( 64.59, 27.93) --
	( 62.31, 28.27) --
	( 60.09, 28.63) --
	( 57.94, 29.00) --
	( 55.85, 29.38) --
	( 53.83, 29.77) --
	( 51.88, 30.18) --
	( 50.00, 30.59) --
	( 48.20, 31.02) --
	( 46.47, 31.45) --
	( 44.81, 31.90) --
	( 43.24, 32.35) --
	( 41.74, 32.81) --
	( 40.33, 33.29) --
	( 38.99, 33.77) --
	( 37.75, 34.25) --
	( 36.59, 34.75) --
	( 35.51, 35.25) --
	( 34.52, 35.75) --
	( 33.63, 36.26) --
	( 32.82, 36.78) --
	( 32.10, 37.30) --
	( 31.47, 37.82) --
	( 30.94, 38.35) --
	( 30.49, 38.88) --
	( 30.14, 39.41) --
	( 29.89, 39.95) --
	( 29.72, 40.48) --
	( 29.65, 41.02) --
	( 29.68, 41.56) --
	( 29.79, 42.09) --
	( 30.00, 42.63) --
	( 30.31, 43.16) --
	( 30.70, 43.69) --
	( 31.19, 44.22) --
	( 31.78, 44.75) --
	( 32.45, 45.27) --
	( 33.21, 45.79) --
	( 34.06, 46.30) --
	( 35.01, 46.81) --
	( 36.04, 47.31) --
	( 37.16, 47.81) --
	( 38.36, 48.30) --
	( 39.65, 48.78) --
	( 41.02, 49.26) --
	( 42.48, 49.73) --
	( 44.01, 50.19) --
	( 45.63, 50.63) --
	( 47.32, 51.08) --
	( 49.09, 51.51) --
	( 50.93, 51.93) --
	( 52.85, 52.34) --
	( 54.83, 52.73) --
	( 56.89, 53.12) --
	( 59.01, 53.49) --
	( 61.19, 53.86) --
	( 63.44, 54.21) --
	( 65.75, 54.54) --
	( 68.11, 54.87) --
	( 70.53, 55.18) --
	( 73.01, 55.47) --
	( 75.53, 55.75) --
	( 78.10, 56.02) --
	( 80.72, 56.27) --
	( 83.38, 56.51) --
	( 86.07, 56.73) --
	( 88.81, 56.94) --
	( 91.58, 57.13) --
	( 94.38, 57.30) --
	( 97.22, 57.46) --
	(100.07, 57.60) --
	(102.96, 57.73) --
	(105.86, 57.84) --
	(108.78, 57.93) --
	(111.71, 58.01) --
	(114.66, 58.07) --
	(117.61, 58.11) --
	(120.57, 58.14) --
	(123.54, 58.14);
\definecolor{fillColor}{gray}{0.70}

\path[fill=fillColor] ( 82.80, 25.85) circle ( 11);

\path[fill=fillColor] (164.27, 25.85) circle ( 11);

\path[fill=fillColor] (215.07, 37.37) circle ( 8.5);

\path[fill=fillColor] (196.94, 51.75) circle (  6);

\path[fill=fillColor] (123.54, 58.14) circle (  5);

\path[fill=fillColor] ( 50.13, 51.75) circle (  6);

\path[fill=fillColor] ( 32.00, 37.37) circle ( 8.5);
\definecolor{drawColor}{RGB}{50,50,255}

\path[draw=drawColor,line width= 1.2pt,line join=round,line cap=round] ( 82.80, 25.85) --
	( 83.57, 25.78) --
	( 84.34, 25.72) --
	( 85.12, 25.65) --
	( 85.90, 25.59) --
	( 86.68, 25.53) --
	( 87.46, 25.47) --
	( 88.25, 25.41) --
	( 89.04, 25.35) --
	( 89.83, 25.30) --
	( 90.63, 25.24) --
	( 91.43, 25.19) --
	( 92.23, 25.14) --
	( 93.03, 25.09) --
	( 93.84, 25.04) --
	( 94.65, 24.99) --
	( 95.46, 24.94) --
	( 96.27, 24.90) --
	( 97.09, 24.85) --
	( 97.91, 24.81) --
	( 98.73, 24.77) --
	( 99.55, 24.73) --
	(100.37, 24.69) --
	(101.20, 24.65) --
	(102.02, 24.62) --
	(102.85, 24.58) --
	(103.69, 24.55) --
	(104.52, 24.52) --
	(105.35, 24.49) --
	(106.19, 24.46) --
	(107.03, 24.43) --
	(107.86, 24.40) --
	(108.70, 24.38) --
	(109.55, 24.35) --
	(110.39, 24.33) --
	(111.23, 24.31) --
	(112.08, 24.29) --
	(112.92, 24.27) --
	(113.77, 24.26) --
	(114.61, 24.24) --
	(115.46, 24.23) --
	(116.31, 24.22) --
	(117.16, 24.20) --
	(118.01, 24.19) --
	(118.86, 24.19) --
	(119.71, 24.18) --
	(120.56, 24.17) --
	(121.41, 24.17) --
	(122.26, 24.17) --
	(123.11, 24.16) --
	(123.96, 24.16) --
	(124.82, 24.17) --
	(125.67, 24.17) --
	(126.52, 24.17) --
	(127.37, 24.18) --
	(128.22, 24.19) --
	(129.07, 24.19) --
	(129.92, 24.20) --
	(130.77, 24.22) --
	(131.62, 24.23) --
	(132.46, 24.24) --
	(133.31, 24.26) --
	(134.16, 24.27) --
	(135.00, 24.29) --
	(135.85, 24.31) --
	(136.69, 24.33) --
	(137.53, 24.35) --
	(138.37, 24.38) --
	(139.21, 24.40) --
	(140.05, 24.43) --
	(140.89, 24.46) --
	(141.72, 24.49) --
	(142.56, 24.52) --
	(143.39, 24.55) --
	(144.22, 24.58) --
	(145.05, 24.62) --
	(145.88, 24.65) --
	(146.71, 24.69) --
	(147.53, 24.73) --
	(148.35, 24.77) --
	(149.17, 24.81) --
	(149.99, 24.85) --
	(150.81, 24.90) --
	(151.62, 24.94) --
	(152.43, 24.99) --
	(153.24, 25.04) --
	(154.04, 25.09) --
	(154.85, 25.14) --
	(155.65, 25.19) --
	(156.45, 25.24) --
	(157.24, 25.30) --
	(158.04, 25.35) --
	(158.83, 25.41) --
	(159.62, 25.47) --
	(160.40, 25.53) --
	(161.18, 25.59) --
	(161.96, 25.65) --
	(162.73, 25.72) --
	(163.51, 25.78) --
	(164.27, 25.85);

\path[draw=drawColor,line width= 1.2pt,line join=round,line cap=round] ( 82.80, 25.85) -- ( 83.57, 25.78);

\path[draw=drawColor,line width= 1.2pt,line join=round,line cap=round] ( 83.24, 30.89) --
	( 82.80, 25.85) --
	( 82.37, 20.81);

\path[draw=drawColor,line width= 1.2pt,line join=round,line cap=round] (164.27, 25.85) -- (163.51, 25.78);

\path[draw=drawColor,line width= 1.2pt,line join=round,line cap=round] (164.71, 20.81) --
	(164.27, 25.85) --
	(163.84, 30.89);
\definecolor{drawColor}{RGB}{0,0,0}


\definecolor{drawColor}{RGB}{50,50,255}

\path[draw=drawColor,line width= 0.4pt,line join=round,line cap=round] ( 68.72, 41.15) -- ( 96.89, 41.15);

\path[draw=drawColor,line width= 0.4pt,line join=round,line cap=round] ( 93.76, 39.35) --
	( 96.89, 41.15) --
	( 93.76, 42.96);
\definecolor{drawColor}{RGB}{0,0,0}

\node[text=drawColor,anchor=base,inner sep=0pt, outer sep=0pt, scale=  0.90] at ( 82.80, 44.25) {$p_n$};

\definecolor{drawColor}{RGB}{50,50,255}

\path[draw=drawColor,line width= 0.4pt,line join=round,line cap=round] (150.19, 41.15) -- (178.36, 41.15);

\path[draw=drawColor,line width= 0.4pt,line join=round,line cap=round] (175.23, 39.35) --
	(178.36, 41.15) --
	(175.23, 42.96);
\definecolor{drawColor}{RGB}{0,0,0}

\node[text=drawColor,anchor=base,inner sep=0pt, outer sep=0pt, scale=  0.90] at (164.27, 44.25) {$p_{n+1}$};

\node[text=drawColor,anchor=base,inner sep=0pt, outer sep=0pt, scale=  0.90] at (123.54, 29) {$Q_n=q_{n+1}-q_n$};

\node[text=drawColor,anchor=base,inner sep=0pt, outer sep=0pt, scale=  0.90] at ( 36.69, 13.45) {Ring of length $L$};

\node[text=drawColor,anchor=base,inner sep=0pt, outer sep=0pt, scale=  0.90] at (210, 13.45) {$N$ agents};



\end{scope}
\end{tikzpicture}\vspace{-6mm}
		\caption{Illustration of the single-file motion system with periodic boundary conditions. Here, $q_n$ represents the curvilinear position, $Q_n = q_{n+1} - q_n$ is the distance to the right neighbour and $p_n$ denotes the velocity of the $n$-th vehicle.}
		\label{fig:scheme}
	\end{center}
\end{figure}
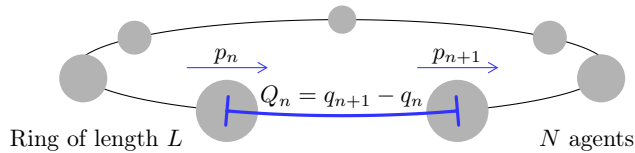

\subsection{Initial stochastic motion model}
First, we introduce a probability space $(\Omega, \mathcal{F}, \mathbb{P})$ 
on which there exists an $N$-dimensional standard Brownian motion 
\begin{equation*}
W=\bigl(W_n\bigr)_{n=1}^N\colon [0,\infty)\times \Omega \to \R^N.
\end{equation*}
For the $n$-th agent at time $t\in [0,\infty)$, the model reads
\begin{equation}\label{eq:modn}
\begin{aligned}
\begin{cases}
dQ_n(t)&=\bigl(p_{n+1}(t)-p_n(t)\bigr)\,dt,\\[1mm]
dp_n(t)&=\bigl(U'(Q_n(t))-U'(Q_{n-1}(t))\bigr)\,dt
+\;\beta\bigl(p_{n+1}(t)-2p_n(t)+p_{n-1}(t)\bigr)\,dt+\sigma \,dW_n(t),
\end{cases}\\ 
Q(0)=\mathfrak{Q},\quad p(0)=\mathfrak{p},
\end{aligned}
\end{equation}
where $\beta\in (0,\infty)$ is the dissipation rate, $\sigma\in\R$ is the noise volatility, and $U'$ is the derivative of a convex potential $U\in C^1(\R,[0,\infty))$. 
In the following, 
we use the quadratic functional 
\begin{equation}\label{eq:QuadraPot}
U(x)=\frac{1}{2}(\alpha x)^2, \quad x\in \R, \qquad\alpha \in (0,\infty). 
\end{equation}

\begin{remark}\label{rem:average_velocity}
	The ensemble's mean velocity
	\begin{equation}\label{eq:average_velocity}
	\xbar{p}(t)
	=\frac{1}{N}\sum_{n=1}^N p_n(t), \quad t\in [0,\infty),
	\end{equation}
	is a Brownian motion with variance $\sigma^2/N$ for any potential function $U$. 
	In fact, thanks to the telescopic form of the model~\eqref{eq:modn} and the periodic boundaries, we have
	\begin{equation*}\begin{aligned}
	d\xbar{p}(t)=\frac{1}{N}\sum_{n=1}^N dp_n(t)
	=\frac{\sigma}N\sum_{n=1}^N dW_n(t), \quad t\in [0,\infty).
	\end{aligned}
	\end{equation*}
	In addition, the ensemble's mean velocity \eqref{eq:average_velocity} is conserved for the deterministic system with $\sigma=0$.
\end{remark}

\subsection{Extended stochastic motion model}
The motion model \eqref{eq:modn} is symmetric and there is no inherent preference or favoured direction of motion.
In the following model we introduce a \textit{relaxation process} to a desired velocity $u\in \R$ with relaxation rate $\gamma \in [0,\infty)$. 
The extended motion model reads for the $n$-th agent at time $ t\in [0,\infty)$
\begin{equation}\label{eq:modn2}
\begin{aligned}
\begin{cases}
dQ_n(t)&=\bigl(p_{n+1}(t)-p_n(t)\bigr)\,dt,\\[1mm]
dp_n(t)&=\bigl(U'(Q_n(t))-U'(Q_{n-1}(t))\bigr)\,dt
+\beta\bigl(p_{n+1}(t)-2p_n(t)+p_{n-1}(t)\bigr)\,dt\\
&\quad +\, \gamma\bigl(u-p_n(t)\bigr)\,dt+\sigma \,dW_n(t),
\end{cases}\\ 
Q(0)=\mathfrak{Q},\quad p(0)=\mathfrak{p} .
\end{aligned}
\end{equation}
Note that the extended model \eqref{eq:modn2} recovers the model \eqref{eq:modn} for $\gamma=0$.

\begin{remark}\label{rem:average_velocity2}
	The ensemble's mean velocity \eqref{eq:average_velocity}
	follows an Ornstein-Uhlenbeck process for any potential function $U$. 
	Thanks to the telescopic form of the model~\eqref{eq:modn2} and the
	periodic boundaries we have
	\begin{equation*}\begin{aligned}
	d\xbar{p}(t) 
	=\gamma\bigl(u-\xbar{p}(t)\bigr)\,dt+\frac{\sigma}N\sum_{n=1}^N dW_n(t), \quad t\in [0,\infty).
	\end{aligned}
	\end{equation*}
	In addition, the ensemble's mean velocity \eqref{eq:average_velocity} relaxes to $u$ for the deterministic system with $\sigma=0$.
\end{remark}

\subsection{Port-Hamiltonian formulation}
Next, we rewrite the extended system \eqref{eq:modn2} in matrix form and identify a port-Hamiltonian structure.
\begin{proposition}\label{prop:PHSform}
	Let $Z(t)=(Q(t),p(t))^\top\in\R^{2N}$, $t\in[0,\infty)$.
	Then the dynamics of the periodic system \eqref{eq:modn2} are given by
	\begin{equation}\label{eq:PHS}
	\begin{aligned}
	dZ(t)=(J-R)\nabla H(Z(t))\,dt+Su\,dt+G\, dW(t),\\[0mm] 
	Z(0)=z(0)=(\mathfrak{Q},\mathfrak{p})^\top\in\R^{2N},
	\end{aligned}
	\end{equation}
	with 
	\begin{equation*}
	J=\begin{bmatrix}0&A\\-A^\top &0 \end{bmatrix}\in \R^{2N\times 2N},
	\quad 
	R=\begin{bmatrix}0&0\\0 &\;\beta A^\top A +\gamma I\end{bmatrix}\in \R^{2N\times 2N},
	\end{equation*}
	\begin{equation}\label{eq:def_A}
	A=\begin{bmatrix}-1&1& &\\
	&\ddots&\ddots&\\[1mm]
	&&-1&1\\
	1&&&-1\end{bmatrix}\in \R^{N\times N},
	\end{equation}
	\begin{equation*}
	S=\left(\begin{matrix}0\\\gamma \mathbf{1}\end{matrix}\right)\in \R^{2N},\quad
	G=\begin{bmatrix}0\\\sigma I\end{bmatrix}\in \R^{2N\times N},
	\end{equation*}
	$\mathbf{1}=(1,\ldots,1)\in\R^N$ and the Hamiltonian operator $H\colon \R^{2N}\to\R$,
	\begin{equation}\label{eq:def_Hamiltonian}
	H(Q,p)=\frac{1}{2} \|p\|^2+\sum_{n=1}^N U(Q_n), \quad p\in \R^N,\;Q \in \R^N.
	\end{equation}
	The matrix $J$ is skew-symmetric by $N\times N$ block,
	while $R$ is symmetric positive semi-definite.
\end{proposition}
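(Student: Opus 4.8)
The plan is to verify \eqref{eq:PHS} by a direct block-wise computation and then read off the two structural claims, treating both \eqref{eq:modn2} and \eqref{eq:PHS} as algebraic identities between the drift, input and diffusion coefficients (no stochastic calculus is needed, since both systems are driven by the same $W$). First I would compute the gradient of the Hamiltonian \eqref{eq:def_Hamiltonian}: because $H$ splits into a quadratic part $\tfrac12\|p\|^2$ and a sum of potentials $\sum_n U(Q_n)$, one gets $\nabla H(Q,p)=\bigl((U'(Q_n))_{n=1}^N,\,p\bigr)^\top\in\R^{2N}$. Substituting this into the block forms of $J$ and $R$ yields
\begin{equation*}
(J-R)\nabla H(Z)=\begin{bmatrix}Ap\\[1mm]-A^\top U'(Q)-\beta A^\top A\,p-\gamma p\end{bmatrix},
\end{equation*}
and adding $Su=(0,\gamma u\mathbf{1})^\top$ and $G\,dW=(0,\sigma\,dW)^\top$ produces a candidate matrix SDE, which I then compare component by component with \eqref{eq:modn2}.

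The comparison reduces to three elementary identities for the circulant-type matrix $A$ of \eqref{eq:def_A}: with the periodic conventions $x_{N+1}:=x_1$ and $x_0:=x_N$ one has $(Ax)_n=x_{n+1}-x_n$, $(A^\top x)_n=x_{n-1}-x_n$, and consequently $(A^\top A x)_n=2x_n-x_{n-1}-x_{n+1}$. I would establish these simply by reading off the nonzero entries of $A$ and of $A^\top$, taking care of the wrap-around entries (the $1$ in position $(N,1)$ of $A$). Applying the first identity to $p$ recovers $dQ_n=(p_{n+1}-p_n)\,dt$; the second applied to $U'(Q)$ gives the potential-difference drift $U'(Q_n)-U'(Q_{n-1})$; the third gives the discrete-Laplacian dissipation $\beta(p_{n+1}-2p_n+p_{n-1})$; the remaining $-\gamma p_n$ combines with the $n$-th component $\gamma u$ of $Su$ into $\gamma(u-p_n)$; and the noise is $\sigma\,dW_n$ directly. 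The initial condition $Z(0)=(\mathfrak{Q},\mathfrak{p})^\top$ is immediate.

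For the structural claims I would transpose the block matrices directly: $J^\top=-J$ since the off-diagonal $N\times N$ blocks of $J$ are $A$ and $-A^\top$ and the diagonal blocks vanish, so $J$ is skew-symmetric with respect to the $N\times N$ block partition; and $R^\top=R$ since $\beta A^\top A+\gamma I$ is symmetric. Positive semi-definiteness follows from $z^\top R z=p^\top(\beta A^\top A+\gamma I)p=\beta\|Ap\|^2+\gamma\|p\|^2\ge 0$ for $z=(Q,p)^\top$, using $\beta\in(0,\infty)$ and $\gamma\in[0,\infty)$. The only step needing any attention is the index bookkeeping near the periodic boundary — checking that the wrap-around entries of $A$ are consistent with the definition \eqref{eq:def_dist} of $Q_N$ and with the neighbour convention $n+1=1$, $n-1=N$ of \eqref{eq:modn2}; everything else is routine.
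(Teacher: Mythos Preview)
Your proposal is correct and follows essentially the same route as the paper's proof: compute $\nabla H$, identify the block action of $(J-R)$ via the elementary identities $(Ax)_n=x_{n+1}-x_n$, $(A^\top x)_n=x_{n-1}-x_n$, $(A^\top A x)_n=2x_n-x_{n-1}-x_{n+1}$, match coefficients with \eqref{eq:modn2}, and check skew-symmetry of $J$ and positive semi-definiteness of $R$ via $z^\top R z=\beta\|Ap\|^2+\gamma\|p\|^2$. The only cosmetic difference is direction --- you expand \eqref{eq:PHS} and match it to \eqref{eq:modn2}, while the paper rewrites \eqref{eq:modn2} in block form and identifies $\tilde B=J-R$ --- but the content is identical.
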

\begin{proof}
	First note that for all $(Q,p)^\top \in \R^{2N}$ we have
	\begin{equation*}
	(\nabla H)(Q,p)=\begin{bmatrix}U'(Q)\\p\end{bmatrix},\quad\text{with}~~U'(Q)=\bigl(U'(Q_n)\bigr)_{n=1}^N.
	\end{equation*}
	We also have
	\begin{equation*}
	A^\top A=\begin{bmatrix}2&-1&&&-1\\
	-1&2&-1&&\\
	&\ddots&\ddots&\ddots&\\[1mm]
	&&-1&2&-1\\
	-1&&&-1&2\end{bmatrix}\in\R^{N\times N}.
	\end{equation*}
	It follows directly from the extended model \eqref{eq:modn2} that
	\begin{equation}\label{eq:dyn_Q_p}
	\begin{cases}
	dQ(t)&=A p(t)\,dt,\\
	dp(t)&=\bigl(-A^\top U'(Q(t))-(\beta A^\top A+\gamma I) p(t)\bigr)\,dt
	+\;\gamma u \,dt+ \sigma\,dW(t),
	\end{cases}
	\end{equation}
	and therefore
	\begin{equation*}
	\begin{aligned}
	dZ(t)=\tilde{B}\nabla H(Z(t))\,dt + Su\,dt + G\,dW(t)
	\end{aligned}
	\end{equation*}
	with
	\begin{equation*}
	\tilde{B}:=\begin{bmatrix}0&A\\-A^\top &-\beta A^\top A -\gamma I\end{bmatrix}=J-R\quad\text{and}\quad
	S=\begin{bmatrix}0\\\gamma I\end{bmatrix}
	.
	\end{equation*}
	It is clear that $J$ is skew-symmetric and $R$ is symmetric. 
	Furthermore, it holds for all $(Q,p)^\top \in \R^{2N}$ that 
	\begin{equation*}
	(Q^\top, p^{\top})R\begin{pmatrix}
	Q\\ p
	\end{pmatrix}
	=\beta \|A p\|^2+\gamma\|p\|^2\in [0,\infty)
	\end{equation*}
	and therefore $R$ is positive semi-definite. 
\end{proof}

\begin{remark}
	In the model, the Hamiltonian and the skew-symmetric matrix $J$ represent the conservative part of the system. 
	The velocity terms and the relaxation term to the desired velocity are part of the dissipation matrix $R$, the input matrix $S$, and the input control $u$. 
	Note that the model \eqref{eq:modn} with $\gamma=0$ is a stochastic Hamiltonian dissipative system while the extended model \eqref{eq:modn2} with $\gamma>0$ is a stochastic port-Hamiltonian system with input $u$.
\end{remark}

\begin{remark}
	Using the skew-symmetry of $J$ and the It\^o formula, we obtain for the Hamiltonian $H$ given by \eqref{eq:def_Hamiltonian}, after simplifications, that
	\begin{equation}\label{eq:dynHam}
	\begin{aligned}
	dH(Z(t))=&\bigl( -\beta \|A p(t)\|^2 +\gamma \bigl\langle p(t),u \mathbf{1}-p(t) \bigr\rangle\bigr)\,dt
	+\frac{N\sigma^2 }{2}dt + \sigma p^{\top}(t)\, dW(t).
	\end{aligned}
	\end{equation}
	Note that thanks to the skew symmetry, the Hamiltonian behaviour does not depend on the positions of the agents 
	and the distance-dependent interaction potential $U$ (parameter $\alpha$).
\end{remark}

\section{Long-time behaviour}\label{long}
We now consider the dynamics of the difference to the desired speed $u$
\begin{equation}
\tilde{p}_n(t)=p_n(t)-u,\quad t\in[0,\infty),\; n\in\{1,\ldots,N\},
\end{equation}
$\tilde{p}(t)=\bigl(\tilde{p}_n(t)\bigr)_{n=1}^N$ 
and $\tilde{Z}(t)=(Q(t),\tilde{p}(t))^\top$. 
Note that $\tilde{Z}(t)=Z(t)$ for the initial model \eqref{eq:modn} for which $u=0$. 

The process $\tilde{Z}$ satisfies
\begin{equation}\label{eq:def_tilde_z}
d\tilde{Z}(t)= B \tilde{Z}(t)\,dt+G\,dW(t), 
\quad t\in [0,\infty), 
\end{equation}
with
\begin{equation*}
B=\begin{bmatrix}0&A\\-\alpha^2 A^\top &-\beta A^\top A -\gamma I\end{bmatrix} \text{ and } 
G=\begin{bmatrix}0\\\sigma I\end{bmatrix}.
\end{equation*}
Furthermore, the Hamiltonian time derivative for the deterministic system with $\sigma=0$ satisfies
\begin{equation}\label{eq:Hdecreasing}
\frac{dH(\tilde{Z}(t))}{dt} =\ -\beta \|A \tilde{p}(t)\|^2 -\gamma \| 
\tilde{p}(t)\|^2 \le 0.
\end{equation}

\begin{remark}\label{rem:StabDet} 
	The uniform equilibrium solution for which $Q_n=L/N$ and $p_n=\frac1N\sum_{j=1}^Np_j(0)$ for all $n\in\{1,\ldots,N\}$ is stable for the deterministic model~\eqref{eq:modn} with $\sigma=0$ and a quadratic potential \eqref{eq:QuadraPot} for all $\alpha \in (0,\infty)$ and $\beta \in (0,\infty)$.
\end{remark}

\begin{remark}\label{rem:StabDet2} 
	The uniform equilibrium solution for which $Q_n=L/N$ and $p_n=u$ for all $n\in\{1,\ldots,N\}$ is stable for the deterministic model~\eqref{eq:modn2} with $\sigma=0$ and a quadratic potential \eqref{eq:QuadraPot} for all $\alpha \in (0,\infty)$, $\beta \in (0,\infty)$ and $\gamma \in (0,\infty)$. 
\end{remark}

Indeed, the Hamiltonian $H$ given by \eqref{eq:def_Hamiltonian} is a convex operator which is minimal for the uniform configuration $Q_n^\text u=L/N$ and $p_n^\text u=p^\text u$ for all $n\in\{1,\ldots,N\}$, $p^\text u=\xbar p(0)$ for the initial model \eqref{eq:modn} and $p^\text u=u$ for the extended model \eqref{eq:modn2}. 
Then the decrease of the Hamiltonian with time \eqref{eq:Hdecreasing} provides the stability of the deterministic systems.

Interestingly, both the deterministic motion models \eqref{eq:modn} and \eqref{eq:modn2} with a quadratic potential \eqref{eq:QuadraPot} and $\sigma=0$ have stable uniform solutions. 
However, only the extended model~\eqref{eq:modn2} with a control input $u$ in a port-Hamiltonian framework remains stable when stochastically perturbed. 
In the next result we show a weak convergence to a Gaussian distribution. 
In Proposition~\ref{prop:cov_matrix} we determine the covariance matrix of this stationary distribution and in Corollary~\ref{cor:cov_matrix_N_infty} we present its limit as the number of agents $N$ tends to infinity.

\begin{remark}
	In Proposition \ref{prop:ex_invariant_distr} below the spectral analysis of $B$ is extended based on the results given in
	Ehrhardt et al.~\cite{ehrhardt2023collective}.
	However, we can note beforehand that each block of $B$ is circulant and we have for $\lambda\in\C$
	\begin{equation*}
	|B-\lambda I_{2N}|=|C|,\quad C=\lambda^2 I_{N} + \lambda (\beta A^\top A+\gamma I_N) + \alpha^2 A^\top A.
	\end{equation*}
	The matrix $C=\begin{bmatrix}
	c_0 & c_{N-1} & \ldots & c_1\\
	c_1 & c_0 & \ldots & c_2\\
	\hdots & \hdots &  &\hdots \\
	c_{N-1} & c_{N-2} & \ldots & c_0
	\end{bmatrix}$, with $c_0=\lambda^2 + \lambda (2\beta + \gamma) + 2\alpha^2$, $c_1=c_{N-1}=-\lambda\beta-\alpha^2$, and $c_j=0$ for all $j\in\{2,\ldots,N-2\}$, is circulant. 
	Since the determinant of a circulant matrix is given by $|C|=\prod_{j=0}^{N-1} \bigl(c_0 + c_{N-1} \omega^j + c_{N-2} \omega^{2j} + \dots + c_1\omega^{(N-1)j}\bigr)$, where $\omega=e^{2\pi\iu/N}$, we obtain the characteristic equation of the matrix $B$ as
	\begin{equation*}
	\prod_{j=0}^{N-1} \bigl( \lambda^2 + \lambda (2\beta + \gamma) + 2\alpha^2 - (\lambda\beta+\alpha^2)(\omega^j+\omega^{(N-1)j})\bigr)
	=\prod_{j=0}^{N-1} \bigl( \lambda^2 + \lambda (\beta\mu_j + \gamma) + \alpha^2\mu_j\bigr)=0,
	\end{equation*}
	where $\mu_j=2-\omega^j-\omega^{(N-1)j}=2-2\cos\left(\frac{2\pi j}{N} \right)$. From this we can directly compute the eigenvalues of $B$ leading to \eqref{eq:ev_B} in the proof of Proposition \ref{prop:ex_invariant_distr}.
\end{remark}

\begin{proposition}\label{prop:ex_invariant_distr}
	If $\gamma>0$
	then the process $\tilde{Z}$ converges weakly as $t\to\infty$ to a Gaussian distribution with mean vector 
	$\begin{bmatrix}
	L/N \mathbf{1} & 0
	\end{bmatrix}^\top \in \R^{2N}$.
\end{proposition}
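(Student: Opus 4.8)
The plan is to use that \eqref{eq:def_tilde_z} is a linear stochastic differential equation with additive noise and deterministic initial value $\tilde Z(0)=(\mathfrak{Q},\mathfrak{p}-u\mathbf{1})^\top$, so that $\tilde Z(t)$ is Gaussian for every $t\in[0,\infty)$ with mean vector $m(t)=e^{tB}\tilde Z(0)$ and covariance matrix $\Sigma(t)=\int_0^t e^{sB}GG^\top e^{sB^\top}\,ds$. Since a family of Gaussian laws converges weakly as soon as the mean vectors and the covariance matrices converge (L\'evy's continuity theorem applied to the characteristic functions), the statement reduces to proving that $m(t)$ and $\Sigma(t)$ admit limits as $t\to\infty$ and that $\lim_{t\to\infty}m(t)=[L/N\,\mathbf{1},\,0]^\top$.

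The heart of the argument is the spectral analysis of $B$ already begun in the remark preceding the statement. Its characteristic polynomial factorises as $\prod_{j=0}^{N-1}\bigl(\lambda^2+\lambda(\beta\mu_j+\gamma)+\alpha^2\mu_j\bigr)$ with $\mu_j=2-2\cos(2\pi j/N)$. The factor for $j=0$ is $\lambda(\lambda+\gamma)$, giving the eigenvalues $0$ and $-\gamma$; for $j\neq0$ one has $\mu_j>0$, so the quadratic $\lambda^2+\lambda(\beta\mu_j+\gamma)+\alpha^2\mu_j$ has strictly positive coefficients and hence both its roots lie in the open left half-plane. Thus $0$ is a \emph{simple} eigenvalue of $B$ and all other eigenvalues have strictly negative real part. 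Using $\ker A=\ker A^\top=\mathrm{span}\{\mathbf 1\}$ and $\mathrm{range}(A^\top)=\mathbf 1^\perp$ (since $A\mathbf 1=0$ and $A$ has rank $N-1$), a short computation shows that both $\ker B$ and the left null space of $B$ equal $\mathrm{span}\{(\mathbf 1,0)^\top\}$, so the spectral projector onto the $0$-eigenspace is
\begin{equation*}
P=\frac1N\begin{bmatrix}\mathbf 1\mathbf 1^\top&0\\0&0\end{bmatrix}.
\end{equation*}
Then $PB=BP=0$, the subspace $\mathrm{range}(I-P)$ is $B$-invariant, and $B$ restricted to it is Hurwitz, so $\|e^{sB}(I-P)\|\le Ce^{-\delta s}$ for all $s\ge0$ and some $C,\delta\in(0,\infty)$.

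Granting this, both convergences are routine. For the mean, $e^{tB}P=P$ and $e^{tB}(I-P)\to0$ exponentially fast, so $m(t)\to P\tilde Z(0)$; and the telescoping definition \eqref{eq:def_dist} gives $\sum_{n=1}^N\mathfrak{Q}_n=L$, whence $P\tilde Z(0)=[L/N\,\mathbf 1,\,0]^\top$. For the covariance, the key point is that the noise does not act along the degenerate direction, i.e.\ $PG=0$ (indeed $(\mathbf 1^\top,0^\top)G=0$); therefore $e^{sB}G=e^{sB}(I-P)G$ decays exponentially and $\Sigma(t)$ converges to the finite limit $\Sigma_\infty=\int_0^\infty e^{sB}GG^\top e^{sB^\top}\,ds$ (which is the unique positive semi-definite solution of the Lyapunov equation $B\Sigma_\infty+\Sigma_\infty B^\top=-GG^\top$ with $\mathrm{range}(\Sigma_\infty)\subseteq\mathrm{range}(I-P)$). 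Hence $\tilde Z(t)$ converges weakly to the Gaussian law with mean $[L/N\,\mathbf 1,\,0]^\top$ and covariance $\Sigma_\infty$, a law supported on the affine hyperplane $\{(Q,\tilde p)\in\R^{2N}:\sum_{n=1}^NQ_n=L\}$.

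The main obstacle is precisely the zero eigenvalue of $B$, which encodes the conserved quantity $\sum_{n=1}^NQ_n(t)\equiv L$: one must check that it is simple (so $e^{tB}$ remains bounded), that the Brownian forcing is orthogonal to the associated left eigenvector (the identity $PG=0$), and that the deterministic drift carries the projected initial datum to exactly $[L/N\,\mathbf 1,\,0]^\top$. With these three facts in place, the standard long-time behaviour of the Ornstein--Uhlenbeck process on the complementary stable subspace completes the proof.
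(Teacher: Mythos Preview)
Your proof is correct and captures exactly the three essential ingredients the paper's argument also rests on: the simplicity of the zero eigenvalue of $B$, the fact that the noise does not act along the corresponding (left) eigendirection, and the identification of the projected initial datum with $[L/N\,\mathbf 1,\,0]^\top$.

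The execution, however, differs in a useful way. The paper imports the full eigenvector matrix $\mathcal W$ (and its inverse) from the companion reference \cite{ehrhardt2023collective}, changes variables to $Y(t)=\mathcal W^{-1}\tilde Z(t)$, and then reads off the constancy of the first coordinate $Y_1$ from the explicit form of the first row $u_{0,1}^*=(v_0^*,0)$ of $\mathcal W^{-1}$; the limiting mean is obtained as $\mathcal W(Y_1(0),0)^\top$. Your argument bypasses all of this machinery: you work directly with the spectral projector $P=\tfrac1N\bigl(\begin{smallmatrix}\mathbf 1\mathbf 1^\top&0\\0&0\end{smallmatrix}\bigr)$, verify $PG=0$ and $P\tilde Z(0)=[L/N\,\mathbf 1,\,0]^\top$ by one-line computations, and invoke the standard exponential bound $\|e^{sB}(I-P)\|\le Ce^{-\delta s}$ on the Hurwitz complement. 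This is more self-contained---it does not require the explicit diagonalisation from the cited paper---and it also makes transparent why $\Sigma_\infty$ solves the Lyapunov equation \eqref{eq:Lyap1}, which the paper uses immediately afterwards. The paper's route, on the other hand, yields the complete eigenstructure of $B$ as a by-product, which can be convenient if one wants sharper spectral-gap estimates.
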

\begin{proof}
	We first generalise some results of Ehrhardt et al.~\cite{ehrhardt2023collective} to the case $\gamma\neq 0$. In particular, we present the eigenvalue decomposition of the system matrix $B$. 
	To this end, let $\lambda_{0,1}=0$, $\lambda_{0,2}=-\gamma$ and for all $j\in \{1,\ldots, N-1\}$, $k\in \{1,2\}$ 
	let $\mu_j=2-2\cos\left(\frac{2\pi j}{N} \right)\in [0,4]$ and
	\begin{equation}\label{eq:ev_B}
	\lambda_{j,k}=\frac{1}{2}\bigr(-(\beta\mu_j+\gamma)+(-1)^k\sqrt{(\beta\mu_j+\gamma)^2-4\alpha^2 \mu_j}\bigr)\in \C.
	\end{equation}

	We have $B=\mathcal W \Lambda \mathcal W^{-1}$, where $\mathcal W\in \C^{2N\times 2N}$ is given in \cite[Setting 3.1]{ehrhardt2023collective} 
	and $\Lambda\in \C^{2N\times 2N}$ is the diagonal matrix with the vector of eigenvalues
	\begin{equation*}
	\begin{bmatrix}
	\lambda_{0,1} & \lambda_{1,1} & \ldots & \lambda_{N-1,1} & 
	\lambda_{0,2} & \lambda_{1,2} & \ldots & \lambda_{N-1,2} 
	\end{bmatrix} \in \C^{2N}
	\end{equation*}
	on its diagonal. 
	To see this note that $B$ only differs from the system matrix $B$ in \cite{ehrhardt2023collective} by the term $-\gamma I$ in the lower right block. 
	This entails that the statement of \cite[Lemma 3.3]{ehrhardt2023collective} also holds true for $B$ with the only modification that $\lambda_1,\lambda_2\in \C$ are now the complex roots of $z\mapsto z^2+(\beta\kappa \tilde{\kappa}+\gamma)z+\alpha^2\kappa \tilde{\kappa}$. 
	From this it follows using the same arguments as in \cite[Lemma 3.5]{ehrhardt2023collective} that $B=\mathcal W \Lambda \mathcal W^{-1}$. 
	Note that $\lambda_{0,1}=0$ and that all other eigenvalues $\lambda_{j,k}$ have strictly negative real part (in particular, in contrast to the situation in \cite{ehrhardt2023collective}, we have $\lambda_{0,2}=-\gamma<0$). 
	
	Next define the process $Y(t)=\mathcal{W}^{-1}\tilde{Z}(t)$, $t\in [0,\infty)$. 
	Then $Y$ solves the complex-valued stochastic differential equation $dY(t) = \Lambda Y(t)\,dt+\mathcal{W}^{-1}G\,dW(t)$. 
	Note that \cite[Lemma 3.5]{ehrhardt2023collective} provides a closed-form representation of $\mathcal{W}^{-1}$. 
	In particular it follows that the first row of $\mathcal{W}^{-1}G$ is zero (this is due to the fact that the first row of $\mathcal{W}^{-1}$ is given by $u^*_{0,1}=\begin{pmatrix}
	v^*_0 & 0
	\end{pmatrix}$, see \cite[Setting 3.1]{ehrhardt2023collective}). 
	This together with the fact that $\lambda_{1,0}=0$ implies that the first component $Y_1$ of $Y$ is constant equal to $Y_1(0)$. 
	The remaining diagonal entries of $\Lambda$ have strictly negative real parts. 
	From this we conclude that the process $(\Re(Y(t)), \Im(Y(t)))$, $t\in [0,\infty)$, is (degenerate) Gaussian and converges weakly as $t\to\infty$ to its unique invariant distribution. 
	It follows that also $\tilde{Z}(t) = \Re(\tilde{Z}(t))=\Re(\mathcal{W}) \Re(Y(t)) - \Im(\mathcal{W}) \Im(Y(t))$, $t\in [0,\infty)$, converges weakly as $t\to\infty$ to its unique invariant distribution. 
	Since the invariant distribution of $Y$ has a mean vector 
	$(Y_1(0),0)^\top$, 
	we obtain that the mean vector of the invariant distribution of 
	$\tilde{Z}$ is given by 
	\begin{equation*}
	\begin{split}
	&    \mathcal{W} \begin{pmatrix}
	Y_1(0) \\ 0
	\end{pmatrix} = Y_1(0)w_{0,1}=(u_{0,1}^*\tilde{Z}(0))w_{0,1}
	=\frac{1}{\sqrt{N}}\sum_{k=1}^N\tilde{Z}_k(0) \frac{1}{\sqrt{N}} \binom{\mathbf{1}}{0}
	=\frac{L}{N}\binom{\mathbf{1}}{0}
	\end{split}
	\end{equation*}
	where we used the notation of \cite[Setting 3.1]{ehrhardt2023collective} and the fact that $\sum_{k=1}^N\tilde{Z}_k(0)=L$. 
	This concludes the proof.
\end{proof}

The covariance matrix $\Sigma$ of the limiting Gaussian distribution of Proposition~\ref{prop:ex_invariant_distr} necessarily satisfies the matrix Lyapunov equation associated with the system $\tilde{Z}$ (see, e.g., Gardiner~\cite[Section 4.4.6]{gardiner1985handbook} or Pavliotis~\cite[Section 3.7]{pavliotis2014stochastic}). 
In our setting this equation is (see \eqref{eq:def_tilde_z})
\begin{equation}\label{eq:Lyap1}
\B\Sigma+\Sigma\B^\top=-G G^\top.
\end{equation}
In the next steps we will show that \eqref{eq:Lyap1} uniquely determines $\Sigma$ and derive its closed-form representation.
To do this we write  
\begin{equation}\label{eq:ansatz_cov}
\Sigma=\begin{bmatrix}
V_1 & V_2\\ V_2^\top & V_3
\end{bmatrix}\in \R^{2N\times 2N}   
\end{equation}
for some $V_2\in \R^{N\times N}$ and symmetric $V_1,V_3\in \R^{N\times N}$. 
Then \eqref{eq:Lyap1} is equivalent to 
\begin{align}\label{eq:Lyap2}
&AV_2^\top=-V_2A^\top, \,
AV_3-\alpha^2 V_1 A - \beta V_2 A^\top A =\gamma V_2 \\
\label{eq:Lyap3}
&\alpha^2(A^\top V_2+V_2^\top A)+\beta (A^\top AV_3 + V_3 A^\top A) +2\gamma V_3 =\sigma^2 I .
\end{align}
From the symmetry of the system we conclude that the matrices $V_1,V_2,V_3$ are circulant and that $V_2$ is also symmetric. 
In particular, we have that $V_3$ is determined by a vector $v=(v_0,\ldots,v_{N-1})^\top\in \R^N$ via
\begin{equation}\label{eq:V3}
V_3=\begin{bmatrix}
v_0 & v_{N-1} & \ldots & v_1\\
v_1 & v_0 & \ldots & v_2\\
\hdots & \hdots & \hdots &\hdots \\
v_{N-1} & v_{N-2} & \ldots & v_0
\end{bmatrix} 
\end{equation}
and similarly $V_1$ and $V_2$. 
Since circulant matrices commute, we see that \eqref{eq:Lyap3} is equivalent to $\beta A^\top AV_3+\gamma V_3=\frac{\sigma^2}{2} I$. 
Since $V_3$ is completely determined by $v$, it suffices to look at the first column of this matrix equation which is $(\beta A^\top A+\gamma I)v=\frac{\sigma^2}{2} e_1$, where $e_1\in \R^N$ is the first standard unit vector. 
Since $(\beta A^\top A+\gamma I)$ is circulant, this linear system is equivalent to the convolution equation $w * v=\frac{\sigma^2}{2} e_1$, where $w=\begin{pmatrix}
2\beta+\gamma & -\beta & 0 & \ldots 0 & -\beta   
\end{pmatrix}^\top\in \R^N$ is the first column of $\beta A^\top A+\gamma I$. 
Next we apply the discrete Fourier transform $\mathcal{F}_N$ to this equation to obtain for each $k\in \{0,1,\ldots, N-1\}$ that 
$[\mathcal{F}_N(w)](k)[\mathcal{F}_N(v)](k)=\frac{\sigma^2}{2}[\mathcal{F}_N(e_1)](k)$. 
Using that $[\mathcal{F}_N(w)](k)=2\beta+\gamma-\beta(e^{-2\pi \iu k/N}+e^{2\pi \iu k/N})$
and $[\mathcal{F}_N(e_1)](k)=1$ and applying the inverse Fourier transform gives
\begin{equation}\label{eq:v_j}
\begin{split}
v_j&=\frac{\sigma^2}{2N}\sum_{k=0}^{N-1}\frac{e^{2\pi \iu jk/N}}{2\beta+\gamma-\beta(e^{-2\pi \iu k/N}+e^{2\pi \iu k/N})}
=\frac{\sigma^2}{2N}\sum_{k=0}^{N-1}\frac{\cos(2\pi jk/N)}{\gamma+4\beta\sin^2(\pi k/N)}.
\end{split}
\end{equation}
This determines $V_3$. 

Next we use \eqref{eq:Lyap2} to show that $V_2=0$. Note that the fact that $V_2$ is symmetric and circulant, together with the first equation in \eqref{eq:Lyap2}, ensures that it satisfies $(A+A^\top)V_2=0$. 
Since the kernel of $A+A^\top$ is spanned by $\mathbf{1}$, we conclude that $V_2$ is a multiple of $\overline{\mathbf{1}}$, where $\overline{\mathbf{1}}\in \R^{N\times N}$ is the matrix consisting only of $1$s. 
Multiplying the second equation in \eqref{eq:Lyap2} by $\mathbf{1}$ (remember that $\mathbf{1}=(1,\ldots,1)^\top \in \R^N$) implies $\gamma V_2 \mathbf{1}=0$, which finally gives $V_2=0$.

The block $V_1$ can now be derived from the second equation in \eqref{eq:Lyap2}. 
Again, due to the cyclicity of $V_3$, this equation is equivalent to $V_3A=\alpha^2V_1A$, and so we can read off the solution $V_1=\frac{1}{\alpha^2}V_3$. Note, however, that $A$ is not regular, but only of rank $N-1$, with $\mathbf{1}\in \R^N$ being the basis of its kernel. 
From this we conclude that the solution set of $V_1=\frac{1}{\alpha^2}V_3$ is given by $\{\alpha^{-2}(V_3+\kappa \overline{\mathbf{1}})|\kappa \in \R\}$.
Next, note that for all $t\in [0,\infty)$ we have that $\sum_{n=1}^NQ_n(t)=L$. 
This implies that $0=\mathbf{1}^\top V_1 \mathbf{1}=\alpha^{-2}\mathbf{1}^\top (V_3+\kappa \overline{\mathbf{1}})\mathbf{1}=\alpha^{-2}(N\sum_{j=0}^{N-1}v_j+\kappa N^2)$. 
This implies that 
$\kappa=-\frac{1}{N}\sum_{j=0}^{N-1}v_j=-\frac{\sigma^2}{2\gamma N}$. So we have $V_1=\alpha^{-2}(V_3-\frac{\sigma^2}{2\gamma N} \mathbf{1})$. 
We summarise these results in the following proposition.
\begin{proposition}\label{prop:cov_matrix}
	Suppose $\gamma>0$. Then
	the covariance matrix of the limiting distribution of $\tilde{Z}$ (cf.\ Proposition~\ref{prop:ex_invariant_distr}) is given by \eqref{eq:ansatz_cov} with $V_3$ given by \eqref{eq:V3} and $v$ by \eqref{eq:v_j}, $V_2=0$ and $V_1=\alpha^{-2}(V_3-\frac{\sigma^2}{2\gamma N} \mathbf{1})$.
\end{proposition}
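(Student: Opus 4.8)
The plan is to use that, by Proposition~\ref{prop:ex_invariant_distr}, the process $\tilde Z$ has a Gaussian limiting distribution which is unique once the deterministic constraint $\mathbf 1^\top Q(t)=L$ is imposed, so that its covariance matrix $\Sigma$ is pinned down by two ingredients: the matrix Lyapunov equation \eqref{eq:Lyap1}, $B\Sigma+\Sigma B^\top=-GG^\top$, which is the standard characterisation of the stationary covariance of the linear SDE \eqref{eq:def_tilde_z}, and the side condition $\mathbf 1^\top V_1\mathbf 1=0$ coming from the fact that $\sum_{n}Q_n(t)=L$ almost surely. First I would substitute the block ansatz \eqref{eq:ansatz_cov} with symmetric $V_1,V_3$ into \eqref{eq:Lyap1} and read off the equivalent block relations \eqref{eq:Lyap2}--\eqref{eq:Lyap3}.

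Second, I would carry out a symmetry reduction. The drift and diffusion matrices in \eqref{eq:def_tilde_z} are equivariant under the cyclic relabelling of the agents and under the reflection $n\mapsto -n$ (with $p\mapsto -p$), all of which preserve the constraint $\mathbf 1^\top Q=L$; since the invariant distribution on that affine subspace is unique it must be invariant under these symmetries, and hence $V_1,V_2,V_3$ are circulant and $V_2$ is symmetric. Because circulant matrices commute, \eqref{eq:Lyap3} collapses to $(\beta A^\top A+\gamma I)V_3=\tfrac{\sigma^2}{2}I$, a circulant (convolution) system in the first column of $V_3$; I would solve it by the discrete Fourier transform, which produces the entries \eqref{eq:v_j} of the generating vector $v$ and thus $V_3$.

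Third I would determine the remaining blocks. Combining symmetry and circulancy of $V_2$ with the first identity in \eqref{eq:Lyap2} yields $(A+A^\top)V_2=0$; since $\ker(A+A^\top)=\operatorname{span}(\mathbf 1)$, $V_2$ is a multiple of $\overline{\mathbf 1}$, and multiplying the second identity in \eqref{eq:Lyap2} by $\mathbf 1$ forces $\gamma V_2\mathbf 1=0$, hence $V_2=0$. With $V_2=0$ the second identity in \eqref{eq:Lyap2} reduces (again by commutativity) to $V_3A=\alpha^2 V_1 A$, so $V_1$ is determined up to $\ker A=\operatorname{span}(\mathbf 1)$, i.e.\ $V_1\in\{\alpha^{-2}(V_3+\kappa\overline{\mathbf 1}):\kappa\in\R\}$. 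The constraint $\mathbf 1^\top V_1\mathbf 1=0$ then forces $\kappa=-\tfrac{\sigma^2}{2\gamma N}$ after inserting the explicit $V_3$, which gives the asserted $V_1$.

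I expect the only genuinely delicate point to be justifying that \eqref{eq:Lyap1} together with the symmetry and the single scalar constraint $\mathbf 1^\top V_1\mathbf 1=0$ really determines $\Sigma$: the matrix $B$ has the eigenvalue $0$ (the conserved quantity $\sum_n Q_n$), so \eqref{eq:Lyap1} alone has a one-parameter family of solutions, and one has to argue cleanly that it is precisely the conservation law, inherited through Proposition~\ref{prop:ex_invariant_distr}, that selects the right member of this family. Everything else --- the Fourier inversion leading to \eqref{eq:v_j}, and the kernel computations for $A$ and $A+A^\top$ --- is routine.
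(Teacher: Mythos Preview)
Your proposal is correct and follows essentially the same route as the paper: the paper also derives the block Lyapunov equations \eqref{eq:Lyap2}--\eqref{eq:Lyap3}, invokes the system's symmetry to force $V_1,V_2,V_3$ circulant and $V_2$ symmetric, solves $(\beta A^\top A+\gamma I)V_3=\tfrac{\sigma^2}{2}I$ by the discrete Fourier transform to obtain \eqref{eq:v_j}, argues $V_2=0$ via $\ker(A+A^\top)=\operatorname{span}(\mathbf 1)$ and $\gamma V_2\mathbf 1=0$, and finally fixes the free parameter in $V_1$ through the conservation law $\mathbf 1^\top V_1\mathbf 1=0$. Your identification of the one delicate point---that the zero eigenvalue of $B$ leaves a one-parameter family of Lyapunov solutions resolved only by the constraint $\sum_n Q_n=L$---matches exactly how the paper handles it.
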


Next, we present the limit of the covariance matrix as the number of agents $N$ tends to infinity.

\begin{corollary}\label{cor:cov_matrix_N_infty}
	Suppose $\gamma>0$ and for all $N\in\{3,4,\ldots\}$, $j\in {\{0,1,\ldots,N-1\}}$ let $v_j^N$ be given by~\eqref{eq:v_j}.
	Then it holds for all $\tilde{N} \in \{3,4,\ldots\}$, $j\in\{0,1,\ldots,\tilde{N}-1\}$ that $(v_j^N)_{N\in\{\tilde{N},\tilde{N}+1,\ldots\}}$ converges to $\frac{\sigma^2 a^j}{2F}$ with 
	$a=1+\frac{\gamma}{2\beta}-\frac{F}{2\beta}$ and $F=\sqrt{\gamma^2+4\beta\gamma}$.
\end{corollary}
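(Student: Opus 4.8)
The plan is to compute the $N\to\infty$ limit of the explicit formula \eqref{eq:v_j} directly. Fix $\tilde N$ and $j\in\{0,\ldots,\tilde N-1\}$. Writing \eqref{eq:v_j} in its exponential form,
\[
v_j^N=\frac{\sigma^2}{2N}\sum_{k=0}^{N-1}\frac{e^{2\pi\iu jk/N}}{2\beta+\gamma-\beta\bigl(e^{-2\pi\iu k/N}+e^{2\pi\iu k/N}\bigr)},
\]
I would recognise the right-hand side as $\frac{\sigma^2}{2}$ times the $j$-th Fourier coefficient of the (periodic) sequence that solves the convolution equation $w*v^N=\tfrac{\sigma^2}{2}e_1$ with $w$ the first column of $\beta A^\top A+\gamma I$. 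Equivalently, $v_j^N$ is a Riemann-type sum for the integral
\[
\frac{\sigma^2}{2}\int_0^1\frac{e^{2\pi\iu j\theta}}{\gamma+4\beta\sin^2(\pi\theta)}\,d\theta
=\frac{\sigma^2}{2}\cdot\frac{1}{2\pi}\int_{-\pi}^{\pi}\frac{e^{\iu j\varphi}}{\gamma+2\beta(1-\cos\varphi)}\,d\varphi .
\]
Since the integrand is smooth in $\theta$ (the denominator is bounded below by $\gamma>0$) and the summand in \eqref{eq:v_j} is exactly the value of this integrand at the grid point $\theta=k/N$, convergence of the sum to the integral is immediate; the only point to note is that it is uniform in $j\in\{0,\ldots,\tilde N-1\}$, which follows because there are only finitely many such $j$ for a fixed $\tilde N$.

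It then remains to evaluate the integral in closed form. The standard device is the substitution $z=e^{\iu\varphi}$, turning it into a contour integral over the unit circle:
\[
\frac{1}{2\pi}\int_{-\pi}^{\pi}\frac{e^{\iu j\varphi}}{\gamma+2\beta-\beta(z+z^{-1})}\,d\varphi
=\frac{1}{2\pi\iu}\oint_{|z|=1}\frac{z^{j}}{-\beta z^2+(\gamma+2\beta)z-\beta}\,dz .
\]
The quadratic $-\beta z^2+(\gamma+2\beta)z-\beta$ has roots $a^{\pm1}$ where $a$ is the root inside the unit disc; solving $\beta z^2-(\gamma+2\beta)z+\beta=0$ gives $z=\tfrac{(\gamma+2\beta)\pm\sqrt{(\gamma+2\beta)^2-4\beta^2}}{2\beta}=1+\tfrac{\gamma}{2\beta}\pm\tfrac{F}{2\beta}$ with $F=\sqrt{\gamma^2+4\beta\gamma}$, so the root in $(0,1)$ is exactly $a=1+\tfrac{\gamma}{2\beta}-\tfrac{F}{2\beta}$. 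A residue computation at $z=a$ (the only pole inside $|z|=1$, since $j\ge 0$ means $z^j$ is analytic there) yields $\frac{a^j}{\beta(a^{-1}-a)}=\frac{a^j}{F}$, using that the product of the roots is $1$ and $\beta(a^{-1}-a)=F$ after simplification. Multiplying by $\frac{\sigma^2}{2}$ gives the claimed limit $\frac{\sigma^2 a^j}{2F}$.

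The main obstacle is purely the residue bookkeeping: one has to identify which root lies inside the unit disc (here $0<a<1$ follows from $0<F<\gamma+2\beta$, itself clear since $F^2=\gamma^2+4\beta\gamma<(\gamma+2\beta)^2$ when $\gamma,\beta>0$) and then simplify the residue $\frac{a^j}{-2\beta a+(\gamma+2\beta)}$ to $\frac{a^j}{F}$, which amounts to observing $-2\beta a+(\gamma+2\beta)=F$ directly from the definition of $a$. Everything else — the Riemann-sum convergence, the substitution to a contour integral — is routine and makes no use of anything beyond the smoothness and strict positivity of the denominator, both guaranteed by $\gamma>0$.
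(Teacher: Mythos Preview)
Your proposal is correct and follows essentially the same route as the paper: recognise $v_j^N$ as a Riemann sum for a contour integral over the unit circle, factor the quadratic denominator to locate the single pole $a\in(0,1)$ inside, and evaluate via the residue (the paper phrases this as Cauchy's integral formula applied to $g_j(z)=\frac{\sigma^2 z^j}{2\beta(b-z)}$ with $b=a^{-1}$, which is the same computation). The only cosmetic difference is that the paper isolates the holomorphic factor $g_j$ first rather than differentiating the full denominator, but the pole, the residue $\frac{\sigma^2 a^j}{2F}$, and the justification that $a\in(0,1)$ are identical.
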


\begin{proof}
	Let 
	$a=1+\frac{\gamma}{2\beta}-\frac{F}{2\beta}$ and $b=1+\frac{\gamma}{2\beta}+\frac{F}{2\beta}$ with $F=\sqrt{\gamma^2 + 4 \beta \gamma}$, and observe 
	that $a\in(0,1)$ and $b>1$. 
	For all $j\in\N_0$ let 
	$g_j\colon \{z\in\C\colon \lvert z \rvert < b\} \to \C$, $g_j(z)=\frac{\sigma^2 z^j}{2\beta(b-z)}$ 
	and note that $g_j$ is a holomorphic function on $\{z\in\C\colon \lvert z \rvert < b\}$. 
	Further,  
	let $\Gamma\colon [0,1]\to \C$, $\Gamma(t)=e^{2\pi \iu t}$. 
	Observe for all $N\in\{3,4,\ldots\}$, $j \in \{0,1,\ldots,N-1\}$ 
	that 
	$v_j \equiv v_j^N$ in~\eqref{eq:v_j} 
	is a Riemann sum and 
	that it holds that 
	\begin{equation*}
	v_j^N = \frac{1}{N} \sum_{k=1}^N 
	\frac{ g_j\Bigl(\Gamma\bigl(\frac{k}{N}\bigr) \Bigr) \Gamma\bigl(\frac{k}{N}\bigr)}{\Gamma\bigl(\frac{k}{N}\bigr) - a}  .
	\end{equation*}
	We thus consider for all $j\in\N_0$ the line integral
	\begin{equation*}
	\begin{split}
	\frac{1}{2\pi \iu} \int_{\Gamma} \frac{g_j(z)}{z-a}\,dz 
	& = \int_0^1 \frac{g_j\bigl(\Gamma(t)\bigr) \Gamma(t)}{\Gamma(t) - a}\,dt.
	\end{split}
	\end{equation*}
	By Cauchy's integral formula, we have for all $j\in\N_0$ that 
	\begin{equation*}
	\frac{1}{2\pi \iu} \int_{\Gamma} \frac{g_j(z)}{z-a}\,dz  
	= g_j(a) = \frac{\sigma^2a^j}{2F} .
	\end{equation*}
	Therefore, we obtain for all $j \in \N_0$ that 
	\begin{equation*}
	\begin{split}
	& \lim_{N\to\infty} \frac{1}{N} \sum_{k=1}^N \frac{ g_j\Bigl( \Gamma\bigl(\frac{k}{N}\bigr) \Bigr) \Gamma\bigl(\frac{k}{N}\bigr)}{\Gamma\bigl(\frac{k}{N}\bigr) - a} 
	= \frac{\sigma^2a^j}{2F} 
	= \frac{\sigma^2 \biggl( 1+\frac12 \frac{\gamma}{\beta} - \frac12  \sqrt{\bigl( \frac{\gamma}{\beta} \bigr)^2 + 4 \frac{\gamma}{\beta}}  \biggr)^j}{2\beta \sqrt{\bigl( \frac{\gamma}{\beta} \bigr)^2 + 4 \frac{\gamma}{\beta}}} .
	\end{split}
	\end{equation*}
\end{proof}

\section{Simulation results}\label{sim}
In this section we present simulations of $N=10$ agents moving according to the dynamics \eqref{eq:modn} and \eqref{eq:modn2} along a segment of length $L=501$ with periodic boundaries and $\alpha=\beta=\sigma=1$. 
The simulations are performed from a uniform zero velocity initial condition using an Euler-Maruyama scheme with a time step of $dt=0.001$. 
The trajectories over the first 500 time units are shown in Fig.~\ref{fig:traj}. 
The upper panel shows the trajectories for the diverging dynamics \eqref{eq:modn} with $\gamma=0$, where the ensemble's mean velocity follows a Brownian motion.
The middle and lower panels show the trajectories of the extended port-Hamiltonian model with $\gamma=0.1$ and $\gamma=1$, respectively, where the velocities of the vehicles remain close to the control input velocity $u=0$.

The simulations can be run in real time on the online platform at: 
 \url{https://www.vzu.uni-wuppertal.de/fileadmin/site/vzu/Simulating_Collective_Motion.html?speed=0.7}.

\begin{figure}[!ht]
	\centering\vspace{-3mm}
	\input{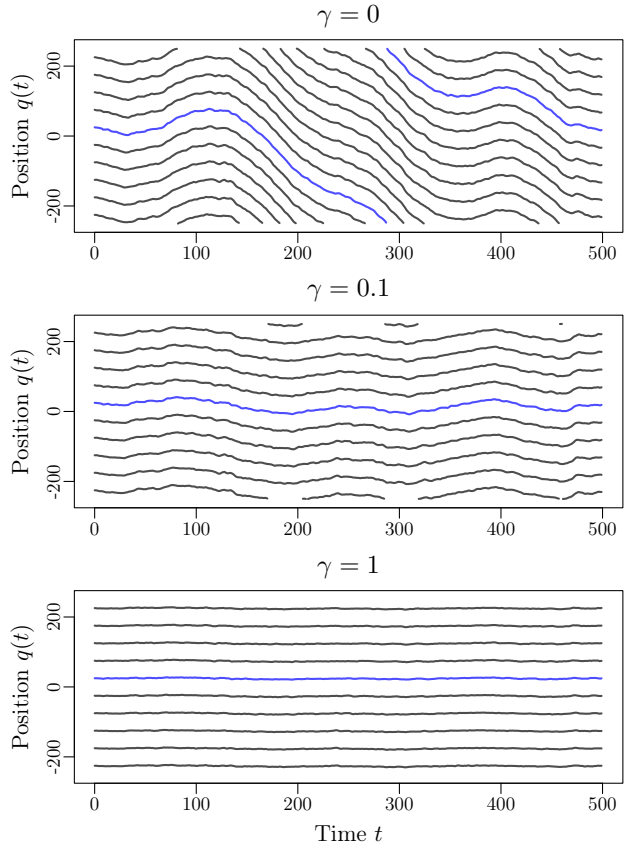}\vspace{-0mm}
	\caption{Trajectories of 10 vehicles along a segment of length $L=501$ with periodic boundaries with the dynamics \eqref{eq:modn2} with $\gamma=0$ (the ensemble's mean velocity diverges, upper panel) and with the pHS with $\gamma=0.1$ and $\gamma=1$ and the controlled input velocity $u=0$ (middle and lower panels).}
	\label{fig:traj}
	\medskip
\end{figure}

	
\section{Conclusion}\label{ccl}
This study has revisited and analysed a symmetric port-Hamiltonian single-file model in one dimension, emphasising the impact of white noise on its stability. 
The introduction of a control term within the port-Hamiltonian framework has proven effective in stabilising the dynamics, even in the presence of noise. 
These findings offer valuable insights for enhancing the control of road traffic flows. 

Future research may explore time-dependent control strategies ($u=u(t)$), distance-dependent control ($u=u(Q_n)$) as seen in the model by 
Rüdiger et al.~\cite{rudiger2022stability}, agent-specific controls ($u=u_n$), and random controls with uncertainties ($u=U$). These potential extensions provide a rich landscape for further investigation and refinement, contributing to the ongoing development of robust and adaptable models for traffic flow control.

	\bibliographystyle{abbrv}
	\bibliography{refs}

\begin{thebibliography}{10}

\bibitem{bansal2021port}
H.~Bansal, P.~Schulze, M.~H. Abbasi, H.~Zwart, L.~Iapichino, W.~H. Schilders,
  and N.~van~de Wouw.
\newblock Port-{H}amiltonian formulation of two-phase flow models.
\newblock {\em Systems \& Control Letters}, 149:104881, 2021.

\bibitem{chandler1958traffic}
R.~E. Chandler, R.~Herman, and E.~W. Montroll.
\newblock Traffic dynamics: studies in car following.
\newblock {\em Operations Research}, 6(2):165--184, 1958.

\bibitem{CIUFFO2021}
B.~Ciuffo, K.~Mattas, M.~Makridis, G.~Albano, A.~Anesiadou, Y.~He, S.~Josvai,
  D.~Komnos, M.~Pataki, S.~Vass, and Z.~Szalay.
\newblock Requiem on the positive effects of commercial adaptive cruise control
  on motorway traffic and recommendations for future automated driving systems.
\newblock {\em Transportation Research Part C: Emerging Technologies},
  130:103305, 2021.

\bibitem{dai2020safety}
S.~Dai and X.~Koutsoukos.
\newblock Safety analysis of integrated adaptive cruise and lane keeping
  control using multi-modal port-{H}amiltonian systems.
\newblock {\em Nonlinear Analysis: Hybrid Systems}, 35:100816, 2020.

\bibitem{ehrhardt2023collective}
M.~Ehrhardt, T.~Kruse, and A.~Tordeux.
\newblock The collective dynamics of a stochastic port-{H}amiltonian
  self-driven agent model in one dimension.
\newblock {\em ESAIM: Mathematical Modelling and Numerical Analysis}, 2024.

\bibitem{friesen2021spontaneous}
M.~Friesen, H.~Gottschalk, B.~R{\"u}diger, and A.~Tordeux.
\newblock Spontaneous wave formation in stochastic self-driven particle
  systems.
\newblock {\em SIAM Journal on Applied Mathematics}, 81(3):853--870, 2021.

\bibitem{gardiner1985handbook}
C.~Gardiner.
\newblock {\em Handbook of Stochastic Methods}, volume~3.
\newblock Springer Berlin, 1985.

\bibitem{gasser2004bifurcation}
I.~Gasser, G.~Sirito, and B.~Werner.
\newblock Bifurcation analysis of a class of ‘car following’ traffic
  models.
\newblock {\em Physica D: Nonlinear Phenomena}, 197(3-4):222--241, 2004.

\bibitem{gunter2020commercially}
G.~Gunter, D.~Gloudemans, R.~E. Stern, S.~McQuade, R.~Bhadani, M.~Bunting,
  M.~L. Delle~Monache, R.~Lysecky, B.~Seibold, J.~Sprinkle, et~al.
\newblock Are commercially implemented adaptive cruise control systems string
  stable?
\newblock {\em IEEE Transactions on Intelligent Transportation Systems},
  22(11):6992--7003, 2020.

\bibitem{jacob2023port}
B.~Jacob and C.~Totzeck.
\newblock Port-{H}amiltonian structure of interacting particle systems and its
  mean-field limit.
\newblock {\em arXiv preprint arXiv:2301.06121}, 2023.

\bibitem{knorn2014scalability}
S.~Knorn, A.~Donaire, J.~C. Ag{\"u}ero, and R.~H. Middleton.
\newblock Scalability of bidirectional vehicle strings with measurement errors.
\newblock {\em IFAC Proceedings Volumes}, 47(3):9171--9176, 2014.

\bibitem{kometani1958stability}
E.~Kometani and T.~Sasaki.
\newblock On the stability of traffic flow (report-{I}).
\newblock {\em Journal of the Operations Research Society of Japan},
  2(1):11--26, 1958.

\bibitem{makridis2021openacc}
M.~Makridis, K.~Mattas, A.~Anesiadou, and B.~Ciuffo.
\newblock {OpenACC}. an open database of car-following experiments to study the
  properties of commercial {ACC} systems.
\newblock {\em Transportation Research Part C: Emerging Technologies},
  125:103047, 2021.

\bibitem{matei2019inferring}
I.~Matei, C.~Mavridis, J.~S. Baras, and M.~Zhenirovskyy.
\newblock Inferring particle interaction physical models and their dynamical
  properties.
\newblock In {\em 2019 IEEE 58th Conference on Decision and Control (CDC)},
  pages 4615--4621, 2019.

\bibitem{orosz2010traffic}
G.~Orosz, R.~Wilson, and G.~St{\'e}p{\'a}n.
\newblock Traffic jams: dynamics and control.
\newblock {\em Philosophical Transactions of the Royal Society A: Mathematical,
  Physical and Engineering Sciences}, 368(1928):4455--4479, 2010.

\bibitem{pavliotis2014stochastic}
G.~Pavliotis.
\newblock {\em Stochastic processes and applications: diffusion processes, the
  {F}okker-{P}lanck and {L}angevin equations}, volume~60 of {\em Texts in
  Applied Mathematics}.
\newblock Springer, 2014.

\bibitem{pipes_OperationalAnalysisTraffic_1953}
L.~A. Pipes.
\newblock An operational analysis of traffic dynamics.
\newblock {\em Journal of Applied Physics}, 24(3):274--281, 1953.

\bibitem{rashad2021port}
R.~Rashad, F.~Califano, F.~P. Schuller, and S.~Stramigioli.
\newblock Port-{H}amiltonian modeling of ideal fluid flow: Part {I}.
  {F}oundations and kinetic energy.
\newblock {\em Journal of Geometry and Physics}, 164:104201, 2021.

\bibitem{rashad2021portb}
R.~Rashad, F.~Califano, F.~P. Schuller, and S.~Stramigioli.
\newblock Port-{H}amiltonian modeling of ideal fluid flow: Part {II}.
  {C}ompressible and incompressible flow.
\newblock {\em Journal of Geometry and Physics}, 164:104199, 2021.

\bibitem{rashad2020twenty}
R.~Rashad, F.~Califano, A.~van~der Schaft, and S.~Stramigioli.
\newblock Twenty years of distributed port-{H}amiltonian systems: a literature
  review.
\newblock {\em IMA Journal of Mathematical Control and Information},
  37(4):1400--1422, 2020.

\bibitem{reuschel1950fahrzeugbewegungen}
A.~Reuschel.
\newblock Fahrzeugbewegungen in der {K}olonne.
\newblock {\em \"Osterreichisches Ingenieur Archiv}, 4:193--215, 1950.

\bibitem{rudiger2022stability}
B.~R{\"u}diger, A.~Tordeux, and B.~Ugurcan.
\newblock Stability analysis of a stochastic port-{H}amiltonian car-following
  model.
\newblock {\em arXiv preprint arXiv:2212.05139}, 2022.

\bibitem{sharf2019analysis}
M.~Sharf and D.~Zelazo.
\newblock Analysis and synthesis of mimo multi-agent systems using network
  optimization.
\newblock {\em IEEE Transactions on Automatic Control}, 64(11):4512--4524,
  2019.

\bibitem{tordeux2018traffic}
A.~Tordeux, G.~Costeseque, M.~Herty, and A.~Seyfried.
\newblock From traffic and pedestrian follow-the-leader models with reaction
  time to first order convection-diffusion flow models.
\newblock {\em SIAM Journal on Applied Mathematics}, 78(1):63--79, 2018.

\bibitem{tordeux_LinearStabilityAnalysis_2012}
A.~Tordeux, M.~Roussignol, and S.~Lassarre.
\newblock Linear stability analysis of first-order delayed car-following models
  on a ring.
\newblock {\em Physical Review E}, 86(3):036207, 2012.

\bibitem{tordeux2016white}
A.~Tordeux and A.~Schadschneider.
\newblock White and relaxed noises in optimal velocity models for pedestrian
  flow with stop-and-go waves.
\newblock {\em Journal of Physics A: Mathematical and Theoretical},
  49(18):185101, 2016.

\bibitem{tordeux2022multi}
A.~Tordeux and C.~Totzeck.
\newblock Multi-scale description of pedestrian collective dynamics with
  port-{H}amiltonian systems.
\newblock {\em Networks and Heterogeneous Media}, 18(2):906--929, 2023.

\bibitem{treiber2009hamilton}
M.~Treiber and D.~Helbing.
\newblock Hamilton-like statistics in onedimensional driven dissipative
  many-particle systems.
\newblock {\em The European Physical Journal B}, 68:607--618, 2009.

\bibitem{treiber2017intelligent}
M.~Treiber and A.~Kesting.
\newblock The intelligent driver model with stochasticity-new insights into
  traffic flow oscillations.
\newblock {\em Transportation Research Procedia}, 23:174--187, 2017.

\bibitem{van2014port}
A.~van~der Schaft and D.~Jeltsema.
\newblock Port-{H}amiltonian systems theory: {A}n introductory overview.
\newblock {\em Foundations and Trends in Systems and Control}, 1(2-3):173--378,
  2014.

\bibitem{wilson_CarfollowingModelsFifty_2011}
R.~Wilson and J.~Ward.
\newblock Car-following models: Fifty years of linear stability analysis
  \textendash{} a mathematical perspective.
\newblock {\em Transportation Planning and Technology}, 34(1):3--18, 2011.

\end{thebibliography}
	
\end{document}